\newcommand{\C}{\mathbb{C}}
\newcommand{\Z}{\mathbb{Z}}
\newcommand{\I}{\sqrt{-1}}
\newcommand{\e}{\mathbf{e}}
\newcommand{\fourCtwo}{\frac{[4][3]}{[2]}}
\newcommand{\IfourCtwo}{\frac{[4][3]}{[2]} \I}
\newcommand{\mat}[4]{
\begin{pmatrix}
#1 & #2 \\
#3 & #4 
\end{pmatrix}
}
\theoremstyle{plain}
   \newtheorem{thm}{Theorem}[section]
   \newtheorem{prop}[thm]{Proposition}
   \newtheorem{lem}[thm]{Lemma}
   \newtheorem{cor}[thm]{Corollary}
\theoremstyle{definition}
\newcommand{\cmtout}[1]{#1}
\renewcommand{\cmtout}[1]{ }
\title[The $E_6$ state sum invariant of lens spaces]
{The $E_6$ state sum invariant of lens spaces}
\author{Kenta Okazaki}
\address{Research Institute for Mathematical Sciences,
Kyoto University,
Sakyo-ku, Kyoto, 606-8502,
Japan}
\email{junes@kurims.kyoto-u.ac.jp}
\date{}
\begin{document}
\maketitle

\begin{abstract}
In this paper, we calculate the values of the $E_6$ state sum invariants for the lens spaces $L(p,q)$.
In particular, we show that
the values of the invariants are determined by 
$p \mod 12$ and $q \mod (p,12)$.
As a corollary, we show that the $E_6$ state sum is a homotopy invariant for the oriented lens spaces.
\end{abstract}

\section{Introduction}
In \cite{TV}, Turaev and Viro constructed a state sum invariant of 3-manifolds
based on their triangulations,
by using the $6j$-symbols of representations of 
the quantum group $U_q(\mathfrak{sl}_2)$.
Further, Ocneanu \cite{Ocneanu2}
generalized the construction
to the case of other types of $6j$-symbols, say,
the $6j$-symbols of subfactors.
{\em The $E_6$ state sum invariant}
is the state sum invariant constructed from the $6j$-symbols of the $E_6$ subfactor,
which we denote by $Z$.
Suzuki and Wakui \cite{SW} calculated the $E_6$ state sum invariant for some of the lens spaces, where they used the representation of the mapping class group of a torus $SL(2,\Z)$.

\vskip 0.3pc

In this paper, we calculate the $E_6$ state sum invariants for all of the lens spaces, as follows.
For integers $m,n$, we denote by $(m,n)$ the great common devisor of $m$ and $n$.
We put $\zeta=\exp(\pi\I/12)$ and
$ [n] = (\zeta^n-\zeta^{-n})/(\zeta-\zeta^{-1})$ for an integer $n$,
noting that
\begin{align*}
	&[12-n] = [n], \quad [n+12] = -[n], \\
	&[2] =(1+\sqrt3)/\sqrt2, \quad
	[3] = 1+\sqrt{3}, \quad
	[4] =  (3 +\sqrt3)/\sqrt2.
\end{align*}

\begin{thm} \label{thm}
	For coprime integers $p$ and $q$, 
the $E_6$ state sum invariant of the lens space $L(p,q)$ is given as
\begin{align} \label{lensvalue}
	Z(L(p,q))=
\begin{cases}
 	|[p]| & \text{if \ } (p,12)=1, \\
 	[4][3]/[2] & \text{if \ } (p,12)=2,6, \\
 	\zeta^{\pm3}[4] & \text{if \ } (p,12)=3 \text{ and } q\equiv\pm1 \mod 3, \\
 	2\zeta^{\pm2} [3] & \text{if \ } (p,12)=4 \text{ and } q\equiv\pm1 \mod 4, \\
 	2[4][3]/[2] & \text{if \ } 12|p \text{ and } q\equiv \pm 1 \mod12, \\
 	0 & \text{if \ } 12|p\text{ and } q\equiv \pm 5 \mod12.
\end{cases}
\end{align}
In particular, the value of $Z(L(p,q))$ is determined by $p \mod 12$ and $q \mod (p,12)$.
\end{thm}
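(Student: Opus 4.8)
The plan is to reduce everything to a genus-one Heegaard decomposition of $L(p,q)$ together with the $SL(2,\Z)$-representation that the $E_6$ state sum attaches to the torus, which we may take to have been set up in the preceding sections (this is what makes contact with the Suzuki--Wakui computation). Write $L(p,q)=V_0\cup_\varphi V_1$ as a union of two solid tori glued along their boundary tori by a homeomorphism whose induced map on $H_1$ is a matrix $\begin{pmatrix} p & * \\ q & * \end{pmatrix}\in SL(2,\Z)$. The theory assigns to $T^2$ a finite-dimensional space $\mathcal H(T^2)$ carrying the modular representation $\rho$ of $SL(2,\Z)$ generated by the $E_6$ matrices $S$ and $T$, and assigns to each solid torus the vacuum vector $e_{\mathbf 1}$; gluing yields
\[
 Z(L(p,q)) \;=\; \bigl\langle e_{\mathbf 1},\, \rho(\varphi)\, e_{\mathbf 1}\bigr\rangle \;=\; \rho(\varphi)_{\mathbf 1\mathbf 1}.
\]
Thus the theorem is the evaluation of a single matrix entry of $\rho(\varphi)$ in an explicit finite-dimensional representation. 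As consistency checks, $L(1,q)=S^3$ forces $\rho(\varphi)_{\mathbf 1\mathbf 1}=|[1]|=1$, and $L(0,1)=S^1\times S^2$ gives the total quantum dimension $2[4][3]/[2]=6+2\sqrt3$, both in agreement with \eqref{lensvalue}.

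To make the entry computable I would feed the (minus-)continued fraction expansion of $p/q$ into $\rho$: it writes $\varphi$ as a word $T^{a_1}ST^{a_2}S\cdots ST^{a_n}$ up to a central element, so that $Z(L(p,q))$ is a matrix entry of $ST^{a_1}S\cdots ST^{a_n}$. Collapsing this word --- equivalently, running the surgery formula directly, since $L(p,q)$ is obtained by rational surgery on the unknot --- presents $Z(L(p,q))$ as a generalized quadratic Gauss sum built from the twists and the $S$-matrix of the relevant modular category, weighted by quantum dimensions formed out of $[2],[3],[4]$. The decisive structural point is that $\zeta=\exp(\pi\I/12)$, so every twist is a power of $\zeta$ and $T$ has order dividing $24$ up to an overall scalar; consequently the sum depends on $p$ only through a bounded residue ($p\bmod 24$ suffices) and on $q$ only through the residue governing the quadratic character of the sum, while the elementary moves on continued fractions that do not change $L(p,q)$ are absorbed by the relations among $S$ and $T$. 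This gives the last sentence of the theorem once the case evaluation is done, because $(p,12)$ is itself a function of $p\bmod 12$ and every branch of \eqref{lensvalue} involves $q$ only through $q\bmod(p,12)$ and $p$ only through $p\bmod 12$.

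It remains to evaluate the Gauss sum in each case $d:=(p,12)\in\{1,2,3,4,6,12\}$. Quadratic Gauss-sum reciprocity converts the sum ``over $\Z/p$'' into a sum ``over $\Z/d$'' with an explicit prefactor coming from the non-Gaussian directions in $\mathcal H(T^2)$. For $d=1$ the reduced sum has a single term and leaves the entry $\rho\bigl(\begin{smallmatrix} p & * \\ 1 & * \end{smallmatrix}\bigr)_{\mathbf 1\mathbf 1}=|[p]|$. For $d=2,6$ the $d$-term sum, after substituting $[2]=(1+\sqrt3)/\sqrt2$, $[3]=1+\sqrt3$, $[4]=(3+\sqrt3)/\sqrt2$, collapses to $[4][3]/[2]=3+\sqrt3$ with no residual $q$-dependence. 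For $d=3$ a three-term Gauss sum produces $\zeta^{\pm3}[4]$, the sign read off from $q\bmod 3$; for $d=4$ a four-term sum gives $2\zeta^{\pm2}[3]$ according to $q\bmod 4$; and for $d=12$ the twelve-term sum equals $2[4][3]/[2]$ when $q\equiv\pm1\bmod12$ and vanishes when $q\equiv\pm5\bmod12$.

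The main obstacle I anticipate is not orders of magnitude but the exact phases and the vanishing: to obtain precisely $\zeta^{\pm3}[4]$, $2\zeta^{\pm2}[3]$, and in particular exactly $0$ for $12\mid p$ with $q\equiv\pm5$, one needs the fine arithmetic of the $E_6$ modular data --- the precise phases appearing in $S$ and $T$, together with an orthogonality/character-sum identity among them --- and a careful evaluation of small quadratic Gauss sums (the $q\equiv\pm5\bmod12$ sum genuinely cancels). A secondary but real difficulty is bookkeeping: matching the two standard normalizations of $L(p,q)$ --- the symmetry $L(p,q)\cong L(p,q^{*})$ with $qq^{*}\equiv1\bmod p$, and $L(p,-q)=\overline{L(p,q)}$, which flips $\zeta$ to $\zeta^{-1}$ --- against the $\pm$ conventions in \eqref{lensvalue}, so that the correct residue of $q$ is attached to $\zeta^{+3}$ versus $\zeta^{-3}$ and to $\zeta^{+2}$ versus $\zeta^{-2}$.
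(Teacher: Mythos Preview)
Your setup agrees with the paper's: both identify $Z(L(p,q))$ with the $(\mathbf 1,\mathbf 1)$-entry of $\rho$ applied to an $SL(2,\Z)$ matrix with first column $\bigl(\begin{smallmatrix}-q\\p\end{smallmatrix}\bigr)$, exactly formula~\eqref{eq.SW}. From that point on, however, the two arguments diverge, and your route has a real gap.

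The paper does \emph{not} evaluate any Gauss-type sums. Its proof is: (i)~show that $\rho$ kills the principal congruence subgroup $\Gamma=\{P\in SL(2,\Z)\mid P\equiv I_2\bmod 12\}$ (Lemma~\ref{trivact}), done by listing nineteen explicit normal generators of~$\Gamma$ produced by the GAP package \texttt{Congruence}, expressing each as a word in $S,T$, and checking $\rho(P_i)=I_{10}$; (ii)~deduce that if $(p,q)\equiv(p',q')\bmod 12$ then one can choose gluing matrices $A,A'$ with $A'A^{-1}\in\Gamma$, hence $Z(L(p,q))=Z(L(p',q'))$ (Proposition~\ref{12periodic}); (iii)~read off the finitely many values with $1\le p\le 12$ from Suzuki--Wakui's tables. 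No arithmetic of Gauss sums enters at any stage.

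Your proposed route instead asserts that the lens-space entry is a ``generalized quadratic Gauss sum'' to which reciprocity applies, reducing a sum over $\Z/p$ to one over $\Z/(p,12)$. This is where the argument breaks down. The $E_6$ modular data is genuinely non-abelian: the $10\times10$ matrix $\rho(S)$ displayed in Section~2 is not of the form $\zeta^{B(i,j)}$ for a bilinear form, its entries mix the irrationalities $[2],[3],[4]$ with signs and factors of $\sqrt{-1}$, and several of them vanish. The standard Gauss-sum/reciprocity machinery (Landsberg--Schaar, Jeffrey's formula, or the metaplectic computation for abelian Chern--Simons) simply does not apply to such $S$. In particular, the inference ``$T$ has order dividing~$24$, hence the answer depends only on $p$ modulo a bounded residue'' is invalid: knowing $T^{12}=I_{10}$ tells you nothing about $\rho(ST^aS)$ for varying~$a$ beyond periodicity in~$a$, and it does not by itself force $\rho$ to factor through $SL(2,\Z/12\Z)$. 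That factorization is precisely the content of Lemma~\ref{trivact}, and the paper establishes it by checking nineteen generators of~$\Gamma$, not by any order argument on~$T$.

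You correctly flag the ``exact phases and the vanishing'' as the main obstacle, but this is not a detail to be filled in later; it is the entire content of the theorem once the periodicity is known. If you want to salvage your approach without the computer check, you would have to exhibit, inside the $10$-dimensional representation, an explicit decomposition into summands on which $\rho$ restricts to something genuinely of Gauss-sum type (e.g.\ Weil-representation pieces for small discriminant forms), and then run reciprocity on each piece. That is plausible but is a substantial structural statement about the $E_6$ modular data which your proposal does not supply.
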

We note that we normalize the invariant so that $Z(S^3)=1$. 
Thus, our $Z$ is equal to $w Z$ in \cite{SW}, where we put  $w=2+[3]^2=6+2\sqrt3$.
\begin{cor} \label{cor}
	If there exists an orientation-preserving homotopy equivalence
	between the two lens spaces $L(p,q)$ and $L(p',q')$,
	then $Z(L(p,q))=Z(L(p',q'))$.
\end{cor}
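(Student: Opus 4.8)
The plan is to derive Corollary~\ref{cor} from Theorem~\ref{thm} together with the classical homotopy classification of lens spaces. An orientation-preserving homotopy equivalence $L(p,q)\to L(p',q')$ forces $p=p'$, because $p=|H_1(L(p,q);\Z)|$ is a homotopy invariant; in particular $p\equiv p'\pmod{12}$, which matches the ``$p\bmod 12$'' part of the data that the last sentence of Theorem~\ref{thm} says controls $Z$. So the task reduces to showing $q\equiv q'\pmod{(p,12)}$.

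For this I would invoke the Reidemeister--Franz--de Rham classification in its orientation-sensitive form: an orientation-preserving homotopy equivalence $L(p,q)\to L(p,q')$ exists if and only if $qq'\equiv n^2\pmod p$ for some integer $n$. (In the unoriented statement one only gets $qq'\equiv\pm n^2$; the sign equals the degree of the equivalence, reflecting the fact that reversing orientation turns $L(p,q)$ into $L(p,-q)$.) Reducing $qq'\equiv n^2\pmod p$ modulo $d:=(p,12)$ gives $qq'\equiv n^2\pmod d$, and since $q$ and $q'$ are coprime to $p$, hence to $d$, the residue of $n$ is a unit modulo $d$.

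The elementary observation that finishes the argument is that $(\Z/d\Z)^\times$ has exponent dividing $2$ for every divisor $d$ of $12$: this is trivial for $d=1,2$, holds because the group is cyclic of order $2$ for $d=3,4,6$, and holds because $(\Z/12\Z)^\times\cong(\Z/2\Z)^2$ for $d=12$. Hence $n^2\equiv 1\pmod d$ and also $(q')^2\equiv 1\pmod d$, so $qq'\equiv n^2\pmod d$ gives $q\equiv(q')^{-1}\equiv q'\pmod d$, i.e.\ $q\equiv q'\pmod{(p,12)}$. Now $L(p,q)$ and $L(p',q')$ agree in the two pieces of data $p\bmod 12$ and $q\bmod(p,12)$, so the final assertion of Theorem~\ref{thm} yields $Z(L(p,q))=Z(L(p',q'))$ at once (and, should one wish to bypass that assertion, checking the list \eqref{lensvalue} case by case is immediate given $q\equiv q'\pmod{(p,12)}$).

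The only step requiring care is the use of the \emph{orientation-preserving} refinement of the lens-space classification rather than the plain homotopy classification: the hypothesis genuinely cannot be weakened, since, for example, $L(3,1)$ and $L(3,2)$ are homotopy equivalent (orientation-reversingly) while $Z(L(3,1))=\zeta^{3}[4]\neq\zeta^{-3}[4]=Z(L(3,2))$ --- which is consistent with the general principle that these state sum invariants are sent to their complex conjugates by a change of orientation. Everything else is routine.
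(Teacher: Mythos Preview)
Your argument is correct and follows the same overall strategy as the paper's proof: invoke the orientation-preserving homotopy classification of lens spaces (the paper uses the equivalent form $q\equiv n^{2}q'\pmod p$), reduce modulo $(p,12)$, and then appeal to Theorem~\ref{thm}. The one genuine difference is in packaging: the paper treats the cases $(p,12)\in\{1,2,6\}$, $(p,12)\in\{3,4\}$, and $12\mid p$ separately, checking in each case that $q$ and $q'$ fall into the same branch of \eqref{lensvalue}, whereas you observe once and for all that $(\Z/d\Z)^{\times}$ has exponent at most $2$ for every $d\mid 12$, which immediately gives $q\equiv q'\pmod{(p,12)}$ and lets you invoke the last sentence of Theorem~\ref{thm} uniformly. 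Your route is cleaner and makes explicit the single group-theoretic fact that drives the paper's case analysis; the paper's approach, on the other hand, stays closer to the explicit table \eqref{lensvalue}. Your closing remark that the orientation hypothesis cannot be dropped (via $L(3,1)$ versus $L(3,2)$) is a nice addition not present in the paper.
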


\thanks{
The author would like to thank Tomotada Ohtsuki for his encouragement and comments.
 The author is also grateful to Michihisa Wakui for his helpful comments.
}

\section{The calculation of the $E_6$ state sum invariant}
In this section,
we briefly review the calculation of the $E_6$ state sum invariants for the lens spaces.
Suzuki and Wakui \cite{SW} defined the representation 
$\rho: SL(2,\Z) \longrightarrow GL_{10}(\C)$ by
{\tiny
\begin{align*}
&\rho(S)
\\
&=\frac1w
\begin{pmatrix}
	1 & [3] & 1 & [2]^2 & [3] & [3] & \fourCtwo & [3] & [3] & [2]^2 \\
	[3] & \IfourCtwo & -[3] & -[3] & 0 & -\IfourCtwo & 0 & [3] & -[3] & [3] \\
	1 & -[3] & 1 & [2]^2 & -[3] & -[3] & -\fourCtwo & [3] & [3] & [2]^2 \\
	[2]^2 & -[3] & [2]^2 & 1 & -[3] & -[3] & \fourCtwo & -[3] & -[3] & 1 \\
	[3] & 0 & -[3] & -[3] & 0 & 0 & 0 & -2 [3] & 2 [3] & [3] \\
	[3] & -\IfourCtwo & -[3] & -[3] & 0 & \IfourCtwo & 0 & [3] & -[3] & [3] \\
	\fourCtwo & 0 & -\fourCtwo & \fourCtwo & 0 & 0 & 0 & 0 & 0 & -\fourCtwo \\
	[3] & [3] & [3] & -[3] & -2 [3] & [3] & 0 & [3] & [3] & -[3] \\
	[3] & -[3] & [3] & -[3] & 2 [3] & -[3] & 0 & [3] & [3] & -[3] \\
	[2]^2 & [3] & [2]^2 & 1 & [3] & [3] & -\fourCtwo & -[3] & -[3] & 1
\end{pmatrix},
\\
&\rho(T)=
\begin{pmatrix}
	1 & 0 & 0 & 0 & 0 & 0 & 0 & 0 & 0 & 0  \\
	0 & -\zeta^2 & 0 & 0 & 0 & 0 & 0 & 0 & 0 & 0  \\
	0 & 0 & -1 & 0 & 0 & 0 & 0 & 0 & 0 & 0  \\
	0 & 0 & 0 & 1 & 0 & 0 & 0 & 0 & 0 & 0  \\
	0 & 0 & 0 & 0 & \I & 0 & 0 & 0 & 0 & 0  \\
	0 & 0 & 0 & 0 & 0 & -\zeta^2 & 0 & 0 & 0 & 0  \\
	0 & 0 & 0 & 0 & 0 & 0 & 1 & 0 & 0 & 0  \\
	0 & 0 & 0 & 0 & 0 & 0 & 0 & \zeta^8 & 0 & 0  \\
	0 & 0 & 0 & 0 & 0 & 0 & 0 & 0 &  \zeta^{-4} & 0  \\
	0 & 0 & 0 & 0 & 0 & 0 & 0 & 0 & 0 & -1  \\
\end{pmatrix},
\end{align*}
}
\noindent
where $S=\mat0{-1}10$ and $T=\mat1101$ are generators of $SL(2,\Z)$.

Let $p,q$ be coprime integers.
It is known, see \cite[Lemma 4.2]{SW},
that the $E_6$ state sum invariants of lens spaces are given as
\begin{align} \label{eq.SW}
	Z(L(p,q))
	&=
	w {}^t \e \rho(\mat {-q}{b}p{-a} ) \e,
\end{align}
where $a$, $b$ are integers satisfying $aq-bp=1$, 
and we put $\e ={}^t (1,0,0,0,0,0,0,0,0,0).$
We note that the right-hand side of the above formula does not depend on the choice of $a$ and $b$.

\section{Proof of the theorem}
In this section, we prove Theorem \ref{thm} and Corollary \ref{cor}.
In order to show Theorem \ref{thm}, 
we show Proposition \ref{12periodic}, which says that the values of $Z(L(p,q))$ have period $12$ for $p$ and $q$ . 
In order to show Proposition \ref{12periodic}, we show Lemmas
\ref{lem.abab} and \ref{trivact}, as follows.

\begin{lem} \label{lem.abab}
Let $p,q,p',q'$ be integers satisfying
$(p,q)=1$, $(p',q')=1$ and $p \equiv p' ,q \equiv q' \mod12$. 
Then, there exist integers $a,b,a',b'$ such that 
\[
\text{$aq-bp=1$, $a'q'-b'p'=1$\quad and\quad $a \equiv a'$, 
$b \equiv b' \mod 12$.}
\]
\end{lem}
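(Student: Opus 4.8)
The plan is to fix one B\'ezout solution for each of the two pairs and then correct the solution for $(p',q')$ by adding an integer multiple of $(p',q')$, so that both congruences modulo $12$ hold at once. Concretely, using $(p,q)=1$ I would choose integers $a,b$ with $aq-bp=1$, and using $(p',q')=1$ integers $a_0',b_0'$ with $a_0'q'-b_0'p'=1$. For every $t\in\Z$ the pair $a':=a_0'+tp'$, $b':=b_0'+tq'$ again satisfies $a'q'-b'p'=1$, since the terms $tp'q'$ cancel; hence it suffices to choose $t$ so that $a'\equiv a$ and $b'\equiv b\pmod{12}$. Using $p\equiv p'$ and $q\equiv q'\pmod{12}$, this reduces to finding a single integer $t$ solving
\[
tp\equiv a-a_0',\qquad tq\equiv b-b_0'\pmod{12}.
\]

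The crux is the consistency of this one-variable system. Reducing the identity $a_0'q'-b_0'p'=1$ modulo $12$ and again using $p\equiv p'$, $q\equiv q'$ gives $a_0'q-b_0'p\equiv 1\equiv aq-bp\pmod{12}$, hence the compatibility relation
\[
(a-a_0')\,q\equiv(b-b_0')\,p\pmod{12}.
\]
I would then invoke the elementary fact that, whenever $(p,q)=1$ and $\alpha q\equiv\beta p\pmod n$, the system $tp\equiv\alpha$, $tq\equiv\beta\pmod n$ is solvable: pick $u,v\in\Z$ with $up+vq=1$ and set $t=u\alpha+v\beta$; then $tp=u\alpha p+v\beta p\equiv u\alpha p+v\alpha q=\alpha\pmod n$, and symmetrically $tq\equiv\beta\pmod n$, where the compatibility relation is used to trade $v\beta p$ for $v\alpha q$ (resp.\ $u\alpha q$ for $u\beta p$). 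Applying this with $n=12$, $\alpha=a-a_0'$, $\beta=b-b_0'$ produces the desired $t$, and the resulting $a',b'$ complete the proof.

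The only non-routine point is exactly this consistency of two scalar congruences for a single unknown $t$; everything else is a direct verification. Equivalently, one can run the whole argument through the reduction $SL(2,\Z)\to SL(2,\Z/12\Z)$: the images of $\mat{-q}{b}{p}{-a}$ and $\mat{-q'}{b_0'}{p'}{-a_0'}$ lie in $SL(2,\Z/12\Z)$ and have the same first column ${}^t(-q,p)$ modulo $12$, so the second equals the first times a power $\bar T^{\,s}$ of the image of $T$ on the right; choosing an integer lift $s$ and multiplying the second matrix on the right by $T^{-s}$ effects the substitution $(a',b')=(a_0'+sp',\,b_0'+sq')$ and makes the two matrices congruent modulo $12$ entrywise, which is precisely the assertion of the lemma.
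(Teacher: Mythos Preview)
Your argument is correct. Both proofs exploit the non-uniqueness of B\'ezout coefficients, but from opposite ends: the paper fixes one pair $(a,b)$ for $(p,q)$, \emph{forces the congruence first} by writing $a'=a+12x$, $b'=b+12y$, and then uses $(p',q')=1$ to solve the single linear Diophantine equation $p'y-q'x=aw-zb$ that makes the determinant $a'q'-b'p'$ equal to $1$. You instead fix B\'ezout pairs for both, \emph{force the determinant first} via the parametrisation $(a_0'+tp',\,b_0'+tq')$, and then use $(p,q)=1$ together with the compatibility relation $(a-a_0')q\equiv(b-b_0')p\pmod{12}$ to solve the two coupled congruences for $t$. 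The paper's route is marginally shorter (one B\'ezout equation rather than a compatibility check plus a constructive CRT-type step), while your $SL(2,\Z/12\Z)$ reformulation at the end makes the conceptual content transparent and dovetails nicely with Lemma~\ref{trivact}. Either way the substance is the same.
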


\begin{proof}
We put integers $a$ and $b$ satisfying $aq-bp=1$.
Further, we put 
\[
	\mat{a'}{p'}{b'}{q'} = \mat{a}{p}{b}{q}  + 12 \mat xzyw,
\]
	noting that, by assumption, $z$ and $w$ are determined uniquely.
	It is sufficient to show that there exist integers $x$ and $y$ satisfying $a'q'-b'p'=1$.
	The determinant of the right-hand side of the above formula is equal to
\begin{align} \label{deter}
	&(a+12x)(q+12w)-(p+12z)(b+12y)
	\\&= \notag
	a(q+12w)+12xq'-(p+12z)b-12p'y
	\\&= \notag
	1+12(aw+xq'-zb-p'y).
\end{align}
	Since $(p',q')=1$, there exists integers $x$ and $y$ satisfying
\[p' y-q'x=aw-zb.\]
Then, the last term of (\ref{deter}) is equal to $1$.
Thus, we have $a'q'-b'p'=1$, as required.
\end{proof}

\noindent
We denote by $I_n$ the $n$-by-$n$ identity matrix.
We put
\[ \Gamma=\{ P \in SL(2,\Z) \ | P \equiv I_2 \mod 12 \}. \]

\begin{lem} \label{trivact}
	$\Gamma \subset \ker\rho$, that is, $\rho (P)=I_{10}$ for any $P\in\Gamma$.
\end{lem}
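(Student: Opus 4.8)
The plan is to show that $\Gamma$ is generated as a group by a small explicit set of elements, and then to check directly that $\rho$ sends each of those generators to $I_{10}$. Since $\rho$ is a homomorphism and $\ker\rho$ is a subgroup, this suffices. The principal congruence subgroup $\Gamma(12)=\Gamma$ is generated by finitely many elements; in fact one standard choice of generators consists of matrices of the shape $T^{12}=\mat{1}{12}{0}{1}$, its transpose $\mat{1}{0}{12}{1}$, and a few conjugates of these by elements of $SL(2,\Z)$ (equivalently, the images of the standard generators under the presentation of $\Gamma(N)$; alternatively one can use that $\Gamma(12)$ is freely generated together with the cusps by a set one can enumerate via the Reidemeister--Schreier procedure applied to the finite-index inclusion $\Gamma(12)\subset SL(2,\Z)$). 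Concretely, it is enough to exhibit a generating set each of whose members is a word in $S$ and $T$ that the relations let us reduce, so that evaluating $\rho$ on it reduces to evaluating $\rho(S)$ and $\rho(T)$ and multiplying.

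Thus the first step is: identify a finite generating set $\{P_1,\dots,P_k\}\subset\Gamma$. The second step is: for each $P_i$, write $P_i$ as an explicit word $w_i(S,T)$ in the generators of $SL(2,\Z)$ (using $aq-bp$-type bookkeeping exactly as in Lemma \ref{lem.abab}, or just by repeatedly applying $T^{\pm1}$ and $S$ to clear entries). The third step is the computation: substitute the matrices $\rho(S)$ and $\rho(T)$ given in Section 2 and verify $\rho(w_i(S,T))=I_{10}$ for each $i$. Because $\rho(T)$ is diagonal with entries that are $24$th (in fact, $24$th and smaller) roots of unity — note $(-\zeta^2)^{12}=\zeta^{24}=1$, $(-1)^{12}=1$, $\I^{12}=1$, $(\zeta^8)^{12}=1$, $(\zeta^{-4})^{12}=1$ — one already gets $\rho(T^{12})=I_{10}$ immediately, which handles the ``upper triangular'' generator with no real work. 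The conjugated generators require multiplying $\rho(S)$, a power of $\rho(T)$, and $\rho(S)^{-1}$ (or $\rho(S^{-1})$), which is a finite $10\times 10$ matrix computation.

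The main obstacle I expect is twofold. First, pinning down a clean, genuinely sufficient generating set for $\Gamma(12)$: one must be careful that the chosen words really generate all of $\Gamma(12)$ and not a proper subgroup, so I would lean on a known presentation or on Reidemeister--Schreier rather than guessing. Second, the actual $10\times 10$ matrix multiplications over $\Z[\zeta]$ are the bulk of the labor; they are routine but error-prone, and the cleanest route is probably to avoid a huge generating set by using that $\Gamma(12)$ is normally generated in $SL(2,\Z)$ by $T^{12}$ together with the fact (easy from $\rho(T^{12})=I$) that $\rho$ factors through $SL(2,\Z/12)$, so it then suffices to check $\rho(P)=I_{10}$ for $P$ running over a set of coset representatives of $\Gamma$ in $SL(2,\Z)$ whose images generate — or, more simply, to observe that $\rho$ descends to a representation of the finite group $SL(2,\Z/12)$ and then Lemma \ref{trivact} is the tautology that the kernel of that descent contains the trivial subgroup. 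I would present the argument via: (i) $\rho(T)^{12}=I_{10}$ by inspection of the diagonal; (ii) hence $\rho$ factors through $SL(2,\Z/12)$ since $S,T$ and the relation $T\mapsto T+$(multiple of $12$) are respected; (iii) therefore any $P\in\Gamma$, being $\equiv I_2\bmod 12$, maps to $\rho(I_2)=I_{10}$.
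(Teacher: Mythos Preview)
Your initial outline---find a finite (normal) generating set for $\Gamma(12)$, write each generator as a word in $S$ and $T$, and then verify by explicit $10\times 10$ multiplication that $\rho$ sends each to $I_{10}$---is exactly the route the paper takes. The paper uses the GAP package \emph{Congruence} to produce nineteen explicit matrices $P_{1,\pm},P_2,\dots,P_{18}$ whose normal closure is $\Gamma$, records an $S,T$-word for each, and checks $\rho(P_i)=I_{10}$ one by one. So as a \emph{plan}, your first three paragraphs are on target.

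The shortcut you propose at the end, however, does not work. You assert that $\Gamma(12)$ is normally generated in $SL(2,\Z)$ by $T^{12}$, and hence that $\rho(T^{12})=I_{10}$ already forces $\rho$ to factor through $SL(2,\Z/12)$. This is false: adding the single relation $T^{12}=1$ to the standard presentation of $PSL(2,\Z)=\langle s,t\mid s^2=(st)^3=1\rangle$ produces the $(2,3,12)$ triangle group, which is infinite (hyperbolic, since $\tfrac12+\tfrac13+\tfrac1{12}<1$), whereas $PSL(2,\Z/12)$ is finite. Consequently the normal closure $\langle\!\langle T^{12}\rangle\!\rangle$ is a \emph{proper} subgroup of $\Gamma(12)$, and knowing $\rho(T^{12})=I_{10}$ tells you only that $\rho$ factors through that infinite triangle-group cover, not through $SL(2,\Z/12)$. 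Your step (ii) is therefore circular: ``$\rho$ factors through $SL(2,\Z/12)$'' is precisely the statement $\Gamma(12)\subset\ker\rho$ that you are trying to prove, and it cannot be deduced from $\rho(T)^{12}=I_{10}$ alone.

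So the honest computation cannot be avoided: you really must produce an adequate (normal) generating set for $\Gamma(12)$---via Reidemeister--Schreier, GAP, or a reference---and verify $\rho=I_{10}$ on each element. That is what the paper does.
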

\begin{proof}
The GAP package Congruence  \cite{DJKV} shows that
$\Gamma$ is a normal closure of the set of the following elements:
\begin{xalignat*}{4}
 	P_{1,\pm} &=\mat1{\pm 12}01, & 
 	P_2 &=\mat{-143}{12}{-12}1, &
	P_3 &=\mat{-155}{84}{-24}{13}, 
	\\
	P_4 &=\mat{-191}{156}{-60}{49}, &
	P_5 &=\mat{-443}{120}{-48}{13},& 
	P_6 &=\mat{-467}{360}{-48}{37}, 
	\\
	P_7 &=\mat{-299}{108}{-36}{13},& 
	P_8 &=\mat{-311}{216}{-36}{25}, &
	P_9 &=\mat{937}{-396}{168}{-71},
	\\
	P_{10} &=\mat{157}{-36}{48}{-11}, &
	P_{11} &=\mat{157}{-48}{36}{-11}, & 
	P_{12} &=\mat{205}{-84}{144}{-59}, 
	\\
	P_{13} &=\mat{157}{-72}{24}{-11}, &
	P_{14} &=\mat{229}{-132}{144}{-83}, &
	P_{15} &=\mat{169}{-108}{36}{-23}, 
	\\ 
	P_{16} &=\mat{181}{-132}{48}{-35}, &
	P_{17} &=\mat{589}{-108}{60}{-11}, & 
	P_{18} &=\mat{649}{-384}{120}{-71}.
\end{xalignat*}
Further, we can verify that these matrices are  presented as 
the products of $S$ and $T$, as follows.
\begin{alignat*}{2}
&P_{1,\pm} =
	T^{\pm12}
	,&
&P_2 =
	S^2T^{12}ST^{12}S
 	,\\
&P_3 =
	S^2T^{7}ST^{2}ST^{7}ST^{2}S
	,&
&P_4 =
	T^{3}ST^{-5}ST^{2}ST^{-4}STS
	,\\
&P_5 =
	T^{9}ST^{-4}ST^{3}ST^{4}S
	,&
&P_6 =
	T^{10}ST^{4}ST^{3}ST^{-3}STS
	,\\
&P_7 =
	T^{8}ST^{-3}ST^{4}ST^{3}S
	,&
&P_8 =
	T^{9}ST^{3}ST^{4}ST^{-2}STS
	,\\
&P_{9} =
	T^{5}ST^{-2}ST^{-4}ST^{-4}ST^{-3}ST^{2}S,
	\\
&P_{10} =
	T^{3}ST^{-4}ST^{-3}ST^{4}S
	,&
&P_{11} =
	T^{4}ST^{-3}ST^{-4}ST^{3}S
	,\\
&P_{12} =
	TST^{-2}ST^{3}ST^{4}ST^{-2}ST^{2}S
	,&
&P_{13} =
	T^{6}ST^{-2}ST^{-6}ST^{2}S
	,\\
&P_{14} =
	TST^{-2}ST^{-3}ST^{4}ST^{4}ST^{2}S
 	,&
&P_{15} =
	S^2T^{4}ST^{3}ST^{-3}ST^{2}ST^{2}S
	,\\
&P_{16} =
	T^{4}ST^{4}ST^{-3}ST^{-3}STS
	,&
&P_{17} =
	S^2T^{10}ST^{5}ST^{-2}ST^{5}S
	,\\
&P_{18} =
	T^{5}ST^{-2}ST^{2}ST^{-4}ST^{3}ST^{2}S.
\end{alignat*} 
By using these formulae, we can verify that $\rho$ takes to each of the matrices to $I_{10}$,
completing the proof of the lemma.
\end{proof}

\begin{prop} \label{12periodic}
Let $p,q,p',q'$ be integers satisfying
$(p,q)=1$, $(p',q')=1$ and $p \equiv p' ,q \equiv q' \mod12$. 
Then,
$ Z(L(p,q))=Z(L(p',q')).$
\end{prop}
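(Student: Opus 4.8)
The plan is to combine Lemma~\ref{lem.abab} and Lemma~\ref{trivact} with the closed formula \eqref{eq.SW} for $Z(L(p,q))$. Given the hypotheses $p\equiv p'$, $q\equiv q'\bmod 12$, Lemma~\ref{lem.abab} produces integers $a,b,a',b'$ with $aq-bp=1$, $a'q'-b'p'=1$, and $a\equiv a'$, $b\equiv b'\bmod 12$. Then the two matrices
\[
M=\mat{-q}{b}{p}{-a},\qquad M'=\mat{-q'}{b'}{p'}{-a'}
\]
both lie in $SL(2,\Z)$ (their determinants are $aq-bp=1$ and $a'q'-b'p'=1$), and by construction $M\equiv M'\bmod 12$ entrywise. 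The whole point is that $M$ and $M'$ differ by an element of the principal congruence subgroup $\Gamma$.

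Concretely, I would set $P = M^{-1}M'$. Since $SL(2,\Z)$ is a group and reduction mod $12$ is a ring homomorphism, $P\in SL(2,\Z)$ and $P\equiv M^{-1}M'\equiv I_2\bmod 12$, so $P\in\Gamma$. By Lemma~\ref{trivact}, $\rho(P)=I_{10}$. Since $\rho$ is a representation (a group homomorphism), $\rho(M')=\rho(MP)=\rho(M)\rho(P)=\rho(M)$. Plugging this into \eqref{eq.SW},
\[
Z(L(p',q'))=w\,{}^t\e\,\rho(M')\,\e = w\,{}^t\e\,\rho(M)\,\e = Z(L(p,q)),
\]
which is exactly the claim. (One should note that \eqref{eq.SW} is stated as independent of the choice of $a,b$, so it does not matter that Lemma~\ref{lem.abab} gives a particular such choice rather than an arbitrary one; the formula holds for the specific $a,b,a',b'$ we have produced.)

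There is essentially no hard step here, since both lemmas have already done the work: Lemma~\ref{lem.abab} is the number-theoretic input guaranteeing the two $SL(2,\Z)$ matrices are genuinely congruent mod $12$, and Lemma~\ref{trivact} is the representation-theoretic input that $\rho$ kills $\Gamma$. If anything deserves care, it is the bookkeeping that $M^{-1}M'$ really reduces to the identity mod $12$ — one must use that $M$ is invertible over $\Z$ (true since $\det M=1$) so that $M^{-1}$ also has integer entries and reduces mod $12$ to the inverse of $M\bmod 12$, whence $(M\bmod 12)^{-1}(M'\bmod 12)=I_2$. After that the argument is a one-line composition of homomorphisms.
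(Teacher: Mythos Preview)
Your proof is correct and follows essentially the same approach as the paper's own proof: both invoke Lemma~\ref{lem.abab} to obtain two $SL(2,\Z)$ matrices congruent mod~$12$, form $P=M^{-1}M'$ (the paper writes this equivalently as $I_2+A^{-1}(A'-A)$), observe $P\in\Gamma$, and apply Lemma~\ref{trivact} together with \eqref{eq.SW}. Your additional remark justifying why $M^{-1}M'\equiv I_2\bmod 12$ is a nice clarification but does not change the argument.
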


\begin{proof}
From Lemma \ref{lem.abab}, there exist matrices
\[
	A=\mat{-q}{b}{p}{-a},\ A'=\mat{-q'}{b'}{p'}{-a'} \in SL(2,\Z)
\]
such that $A \equiv A' \mod 12$.
We put
$P=I_2+A^{-1}(A'-A)$.
By definition, $P \in \Gamma$ and $A'=AP$.
Thus, by (\ref{eq.SW}) and Lemma \ref{trivact}, we have
\begin{align*}
	Z(L(p',q')) 
	& =w {}^t \e \rho(A')\e
	=w {}^t \e \rho(A)\rho(P) \e
	=w {}^t \e \rho(A)\e 
	=Z(L(p,q)),
\end{align*}
completing the proof.
\end{proof}

\begin{proof}[Proof of Theorem \ref{thm}]
	By Proposition \ref{12periodic}, 
	the left-hand side of (\ref{lensvalue}) has period $12$ for $p$ and $q$.
	On the other hand,
	the right-hand side of (\ref{lensvalue}) also has period 12 for $p$ and $q$.
	By \cite[Appendix D]{SW}, we can verify that (\ref{lensvalue}) holds 
	for coprime integers $p$ and $q$ with $1\le p \le 12$, $q<p$.
	Therefore,  (\ref{lensvalue}) holds for any coprime integers $p$ and $q$.
\end{proof}

\begin{proof}[Proof of Corollary \ref{cor}]
It is known, see \cite[12.1]{M}, that 
there exists an orientation-preserving homotopy equivalence
between the two lens spaces $L(p,q)$ and $L(p',q')$
if and only if $p=p'$ and $q  \equiv n^2 q' \mod p$ for some integer $n$.

When $(p,12) \neq 3,4,12$, by Theorem \ref{thm} the value $Z(p,q)$ does not depend on $q$. Thus, $Z(L(p,q))=Z(L(p',q'))$.

When $(p,12) = k$ with $k=3$ or $4$,
we have $q \equiv n^2 q' \mod k$ because $k|p$.
Thus, $q \not \equiv - q' \mod k$.
Thus, by Theorem \ref{thm},  $Z(L(p,q))=Z(L(p',q'))$.

When $12|p$, we have $q \equiv n^2 q' \mod 12$.
Thus, $q \not \equiv \pm5 q' \mod 12$.
Thus, by Theorem \ref{thm},  $Z(L(p,q))=Z(L(p',q'))$.
\end{proof}



\begin{thebibliography}{99}
\bibitem{DJKV}
A. Dooms, E. Jespers, A. Konovalov, H. Verrill,
{\em GAP package Congruence},
{\tt http://alexk.host.cs.st-andrews.ac.uk/congruence/}



\bibitem{M}J. W. Milnor,
{\em On spaces having the homotopy type of a $CW$-complex, }
Trans. Amer. Math. Soc. \textbf{90} (1959), 272--280.

\bibitem{Ocneanu2}
A. Ocneanu, 
{\em Chirality for operator algebras},
Subfactors (Proc. Taniguchi Symposium in 1993).
World Scientific, Singapore, 1994.


\bibitem{SW}
K. Suzuki, M. Wakui,
{\em On the Turaev--Viro--Ocneanu invariant of 3-manifolds derived from the $E_6$-subfactor},
Kyushu J. Math. \textbf {56} (2002) 59--81.

\bibitem{TV}
V. Turaev, O. Viro,
{\em State sum invariants of $3$-manifolds and quantum $6j$-symbols},
Topology \textbf{31} (1992) 865--902.

\end{thebibliography}
\end{document}